\documentclass{amsart}
\usepackage{amssymb}

\errorcontextlines=0 \textwidth=6in
\hoffset=-0.3in \textheight=8in \theoremstyle{plain}

\newtheorem{theorem}{Theorem}

\newtheorem{lemma}{Lemma}
\newtheorem{proposition}{Proposition}

\theoremstyle{definition}
\theoremstyle{definition} \theoremstyle{remark}
\theoremstyle{remark}

\newcommand{\vs}{\vskip8pt}

\newcommand{\be}{\begin{enumerate}}
\newcommand{\ee}{\end{enumerate}}


\newcommand{\ZZ}{\mathbb{Z}}

\usepackage{graphicx}

\begin{document}

\title{Spread construction for (36,15,6) Hadamard difference sets}

\author[K. Smith] {Ken Smith}
\address [Ken Smith] {Department of Mathematics, Sam Houston State University}
\email {kenwsmith@shsu.edu}
\author[J. Webster] {Jordan Webster}
\address [Jordan Webster] {Department of
Mathematics, Mid Michigan Community College} \email{jdwebster@midmich.edu}

\begin{abstract}
There are exactly 35 inequivalent $(36,15,6)$ difference sets in nine groups.
Eight of the nine groups have a normal Sylow 3-subgroup.  
We give a straightforward ``spread" construction which explains the 32 inequivalent difference sets in these eight groups.
An interesting variation on this construction provides the three difference sets in the ninth group.
\end{abstract}

\maketitle

\section{Introduction}

\normalsize 
A $(v,k,\lambda)$ difference set is a subset $D$ of size $k$ in a group $G$ of order $v$ with the property that for every nonidentity $g\in G$, there are exactly $\lambda$ ordered pairs $(x,y) \in D \times D$ such that $xy^{-1}=g$. Difference sets with parameters $(4m^{2}, 2m^{2}\pm m, m^{2} \pm m)$ are often called ``Hadamard'' difference sets, ``Menon'' difference sets, ``Menon-Hadamard'' difference sets, or ``H-sets''. (For more details on difference sets in general, consult \cite{Lander} or chapter 11 of \cite{Hall}.)

Hadamard difference sets exist in many groups of order $4m^2$ including nonabelian groups.
 Kibler  (\cite{Kibler}) found that nine of the fourteen groups of order 36 contain $(36, 15, 6)$ difference sets. 
 He also found a total of 34 inequivalent difference sets in these nine groups. 
 Smith, in an exhaustive computer search (using {\em GAP}, \cite{GAP}) found another difference set which is inequivalent to the previous 34.
 The 35 difference sets are listed in \cite{CRC}. 
 In this paper we find a general construction that explains all the difference sets found by Kibler and Smith. 

The philosophy of this paper is motivated by the elegance (and ideas) of  \cite{Dillon}.

\vs
Throughout this paper we assume that $G$ is a finite (multiplicative) group. 
We work in the group ring $\ZZ[G]$,
the set of formal sums $\sum_{g\in G}{a_{g}g}$ where $a_{g}\in \ZZ$.
We identify a $D\subseteq G$ with the element $D=\sum_{g\in D}{g} $ in $\ZZ[G]$. 
Notice that this is a slight abuse of notation, but it will be clear whether we are working in the group or the group ring. 
One specific example which is often used in this paper is the notation $<a>$ which symbolizes the sum $\sum_{k=1}^{n}{a^{k}}$ in $\ZZ[G]$ where $n$ is the order of $a$. 
Also, if $X=\sum_{g\in G}{a_{g}g}$ we will denote $X^{(-1)}=\sum_{g\in G}{a_{g}g^{-1}}$. Thus, the $^{(-1)}$ operation ``switches'' the coefficients of $g$ and $g^{-1}$ for every $g\in G$. 
With this notation, an equivalent definition of a difference set using the group ring structure is an element $D$ of the ring with the property that 
\begin{equation} DD^{(-1)}= (k-\lambda)1_{G} + \lambda G \label{basic} \end{equation}
where $1_{G}$ is the identity of the group. 
(We will often omit writing the identity and assume that a scalar $c$ in the group ring will be thought of as $c\cdot 1_{G}$.)

\vs
When working with Hadamard parameters, it is convenient to use the Hadamard transform of the difference set rather than the difference set. 
If $D$ is a difference set (written as group ring element), then the Hadamard transform of $D$ is $\widehat{D}=G-2D$. 
If we have the Hadamard parameters $(4m^{2}, 2m^{2}\pm m, m^{2} \pm m)$, then from the basic equation \ref{basic}, above, the Hadamard transform has the property that 
\begin{equation}  \widehat{D}\widehat{D}^{(-1)}= 4m^{2}. \label{basic_hadamard}\end{equation}
The group ring element $\widehat{D}$ has coefficients $1$ and $-1$ in the place of the coefficients $0$ and $1$ in $D$. 
We may recover $D$ from $\widehat{D}$ via $D=\frac12(G-\widehat{D})$ and so finding the Hadamard transform of a difference set is equivalent to finding the difference set itself. 
In particular, if we can obtain a group ring element $S$ such that $S$ has coefficients of ones and negative ones and such that $S$ satisfies $SS^{(-1)}=4m^{2}$, then $S$ is the Hadamard transform of a difference set.

\section{Product constructions}

There are a number of product constructions for Hadamard difference sets.
Most depend on finding a single difference set or multiple difference sets in certain subgroups of a group and using some multiplication in the group ring to construct a difference set in the group.
 One such construction for Hadamard difference sets was given by Menon in \cite{Menon}.

\begin{theorem}[Menon]
If $G$, $H$, and $K$ are groups such that $G=H\times K$ and $D_{K}$ is a difference set in $K$ and $D_{H}$ is a difference set in $H$, then
$$\widehat{D}=\widehat{D_{H}}\cdot \widehat{D_{K}}$$
is the Hadamard transform of a difference set.
\end{theorem}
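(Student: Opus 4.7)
The plan is as follows. Write $|H|=4m^2$ and $|K|=4n^2$, so $|G|=16m^2n^2=4(2mn)^2$. By the remark concluding Section~1, it suffices to show that $\widehat{D}:=\widehat{D_H}\widehat{D_K}$ is a $\ZZ[G]$-element all of whose coefficients lie in $\{\pm 1\}$ and which satisfies $\widehat{D}\widehat{D}^{(-1)}=4(2mn)^2$.

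First I would check the coefficient condition. Since $G=H\times K$, each $g\in G$ has a unique factorization $g=hk$ with $h\in H$ and $k\in K$. Expanding the product of group ring elements shows that the coefficient of $hk$ in $\widehat{D_H}\widehat{D_K}$ is the product of the coefficient of $h$ in $\widehat{D_H}$ with the coefficient of $k$ in $\widehat{D_K}$. Both factors lie in $\{\pm 1\}$, hence so does the product.

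Next I would verify the Hadamard identity. The direct product structure forces every element of $H$ to commute with every element of $K$, so as elements of $\ZZ[G]$ the factors $\widehat{D_H}$ and $\widehat{D_K}$ commute. Likewise $(hk)^{-1}=h^{-1}k^{-1}$, which lets the $^{(-1)}$ operation factor as $\widehat{D}^{(-1)}=\widehat{D_H}^{(-1)}\widehat{D_K}^{(-1)}$. Combining these observations with equation~\ref{basic_hadamard} applied to each subgroup gives
$$\widehat{D}\widehat{D}^{(-1)} \;=\; \bigl(\widehat{D_H}\widehat{D_H}^{(-1)}\bigr)\bigl(\widehat{D_K}\widehat{D_K}^{(-1)}\bigr) \;=\; (4m^2)(4n^2) \;=\; 4(2mn)^2,$$
as required. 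There is no real obstacle here: the only point worth some care is justifying that the $^{(-1)}$ operation factors across the product and that $H$- and $K$-factors may be freely rearranged, both of which are immediate from $G$ being the internal direct product of $H$ and $K$.
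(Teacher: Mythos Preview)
Your proof is correct and is the standard direct verification. Note, however, that the paper does not actually supply a proof of this theorem: it is stated as a known result with a citation to Menon, so there is nothing in the paper to compare against. Your argument is exactly the natural one --- unique factorization in the direct product gives the $\pm 1$ coefficients, and commutativity of the $H$- and $K$-factors together with the multiplicativity of $^{(-1)}$ over a direct product reduces $\widehat{D}\widehat{D}^{(-1)}$ to the product of the two subgroup identities.
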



Dillon (\cite{Dillon}, remark on p. 13) generalized the product construction as follows:
\begin{theorem}[Dillon]
Suppose $G$ is a group with subgroups $H$ and $K$ such that $G=HK$, $H \cap K = \{1\}.$
Suppose furthermore that $D_{K}$ is a Hadamard difference set in $K$ and $D_{H}$ is a Hadamard difference set in $H.$
Then
$$\widehat{D}=\widehat{D_{H}}\cdot \widehat{D_{K}}$$
is the Hadamard transform of a difference set in $G$.
\end{theorem}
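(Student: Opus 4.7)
The plan is to verify the two conditions singled out just after equation~\ref{basic_hadamard}: that $\widehat{D}:=\widehat{D_H}\cdot\widehat{D_K}$, regarded as an element of $\ZZ[G]$, has every coefficient in $\{\pm 1\}$, and that $\widehat{D}\,\widehat{D}^{(-1)} = 4m^{2}\cdot 1_G$ for an appropriate $m$. Once both conditions hold, the remark at the end of the introduction concludes that $\widehat{D}$ is the Hadamard transform of a difference set in $G$.

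For the coefficient condition I would exploit the hypothesis $G=HK$ with $H\cap K=\{1\}$: a standard counting argument gives a set bijection $H\times K\to G$ sending $(h,k)\mapsto hk$. Writing $\widehat{D_H}=\sum_{h\in H} a_h h$ and $\widehat{D_K}=\sum_{k\in K} b_k k$ with all $a_h,b_k\in\{\pm 1\}$, the expansion of $\widehat{D_H}\cdot\widehat{D_K}$ in $\ZZ[G]$ then contributes exactly one summand $a_h b_k\cdot(hk)$ to each group element, so the coefficient condition is immediate from the uniqueness of the factorization.

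For the product identity I would first record the general group-ring fact $(XY)^{(-1)}=Y^{(-1)}X^{(-1)}$, which follows by linearity from $(gh)^{-1}=h^{-1}g^{-1}$. Equation~\ref{basic_hadamard}, applied inside $\ZZ[H]$ and inside $\ZZ[K]$, then gives $\widehat{D_H}\widehat{D_H}^{(-1)}=|H|\cdot 1_G$ and $\widehat{D_K}\widehat{D_K}^{(-1)}=|K|\cdot 1_G$. The decisive observation is that these values are \emph{scalars}, hence central in $\ZZ[G]$. Using this I can pull $\widehat{D_K}\widehat{D_K}^{(-1)}$ out past $\widehat{D_H}$ and collapse the four-factor product:
\[ \widehat{D}\,\widehat{D}^{(-1)} = \widehat{D_H}\,\widehat{D_K}\,\widehat{D_K}^{(-1)}\,\widehat{D_H}^{(-1)} = |K|\cdot\widehat{D_H}\,\widehat{D_H}^{(-1)} = |H|\,|K| = |G|. \]
Since $|H|=4m_H^{2}$ and $|K|=4m_K^{2}$, this equals $4(2m_Hm_K)^{2}$, which is of the required Hadamard form.

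The only delicate point is the noncommutativity of $G$: because $H$ and $K$ are not assumed to normalize one another, one cannot freely rearrange a mixed product of $H$- and $K$-elements. The argument sidesteps this by never rearranging anything except scalars, which is exactly what the collapse of $\widehat{D_K}\widehat{D_K}^{(-1)}$ to an integer makes possible. That centrality trick is what separates Dillon's generalization from the purely commutative Menon setup, and I expect it to be the only real subtlety in the proof.
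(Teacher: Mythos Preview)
Your argument is correct. The paper itself does not supply a proof of this theorem: it merely records the result, attributing it to Dillon (\cite{Dillon}, remark on p.~13), and then moves on to the spread construction. So there is no ``paper proof'' to compare against; your write-up is a clean self-contained verification of the cited fact, and both steps --- the $\pm 1$ coefficient check via the bijection $H\times K\to G$, and the collapse $\widehat{D_H}\widehat{D_K}\widehat{D_K}^{(-1)}\widehat{D_H}^{(-1)}=|K|\,\widehat{D_H}\widehat{D_H}^{(-1)}=|G|$ using centrality of the scalar $|K|$ --- are exactly right.
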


\vs
 
%
%
%
%
%

\section{The Spread Construction}
 
The generalized product construction will not be very useful in groups of order 36 since no group of order 36 has a proper subgroup with a Hadamard difference set, but we may use some of the general ideas in the construction to build something we call a \emph{spread construction}. 

A spread in a group $G$ is a collection  
of $q+1$ subgroups of $G$, each of order $q$, such that the intersection of any two subgroups is $\{1\}.$  
McFarland (\cite{McFarland}) uses spreads to construct an infinite family of difference sets in abelian groups of order $(q+2)q^2.$
Dillon  (\cite{Dillon}, pages 10-12) generalizes this to nonabelian groups, using spreads on one hand to construct Hadamard difference sets in groups of order $2^{2s}$ and, on the other hand, McFarland difference sets in nonabelian groups (of nonsquare order.)

Suppose a group $G$ of order $(q+2)q^2$ has a subgroup $H$ with a spread $\{L_0,L_1,...,L_q\}$ of $q+1$ subgroups of $H$, each of order $q$.  
Suppose also that e there is a left transversal $h_0, ..., h_{q-1}, h_{q}$ of $H$ in $G$ such that the map $L_i \mapsto h_iL_ih_i^{-1}$ is a permutation of the set 
$\{L_0,L_1,...,L_q\}$. 
Then a McFarland difference set  (with parameters $(q+2)q^2, (q+1)q, q)$) can be constructed in $G$
 (\cite{Dillon}, theorem on p. 12.) 
We seek a similar description of the $(36,15,6)$ Hadamard difference sets.


We assume for the remainder of the paper that $G$ is a group of order 36 with subgroup $H=<a,b : a^{3}=b^{3}=1 , ab=ba> \cong C_{3}\times C_{3}$ and that $L_0, L_1, L_2, L_3$ represent the four nontrivial proper subgroups of $H.$ 
These four subgroups form a spread.
The nine elements of $H$ form the points of an affine plane of order three. 
The 12 lines of the affine plane are the cosets of the four subgroups $L_0, L_1, L_2, L_3.$
\vs

We show that a modification of Dillon's construction from \cite{Dillon} describes {\em all} 32 difference sets arising in the eight groups in which the Sylow 3-subgroup $H$ is normal. 
 In the one group in which $H$ is not normal, the spread construction fails and in that case we provide an interesting alternative construction that {\em relies} on the fact that  $H$ in {\em not} normal.
 
\vs 

In the group ring $\ZZ[H]$ the subgroups $L_i$ obey the following identities.
\begin{enumerate}
\item[(0)] $L_i^{(-1)}=L_i.$
\item $L_iH=3H$
\item $L_i^2= 3L_i.$  If $i \ne j$ then $L_iL_j = H$.  
\item $\sum_{i=0}^3 L_i = 3+H.$
\end{enumerate} 

If we define $\widehat{L_{i}}:=H-2L_{i}$, the ``transform" of $L_i$ with respect to $H$ then we have the following:

\begin{enumerate}
\item[(0)] $\widehat{L_{i}}^{(-1)}=\widehat{L_{i}}$
\item $\widehat{L_i}H=3H$
\item $\widehat{L_{i}}^{2}=3H-6\widehat{L_{i}}$.  If $i \ne j$ then $\widehat{L_{i}}\widehat{L_{j}}=H.$
\item $\sum_{i=0}^{3}\widehat{L_{i}}=-6+2H$

\end{enumerate} 

Suppose that $x, y$ are in the normalizer of $H$ and that $\{1,x,y,xy\}$ is a left transversal of $H$ in $G$.
Consider the group ring element
 \begin{equation}\boxed{S=\widehat{L_{0}} -xh_1\widehat{L_{1}}-yh_2\widehat{L_{2}}-xyh_3\widehat{L_{3}}.}\end{equation}

With this notation
$$SS^{(-1)}
= \widehat{L_0}^2 + x\widehat{L_1}^2x^{-1} + y\widehat{L_2}^2y^{-1} +xy\widehat{L_3}^2x^{-1}y^{-1}$$
$$= 12H-6(\widehat{L_0} + x\widehat{L_1}x^{-1} + y\widehat{L_2}y^{-1} +xy\widehat{L_3}x^{-1}y^{-1})$$

Now 
$\widehat{L_{0}}+ \widehat{L_{1}} + \widehat{L_{2} }+ \widehat{L_{3}} = -6+2H$,  which is just what is required in the parentheses, above, to get the desire $36.$

Therefore
$SS^{(-1)} = 36$
if and only if 

$$\widehat{L_{0}}+ \widehat{L_{1}} + \widehat{L_{2} }+ \widehat{L_{3}} 
=
\widehat{L_0} + x\widehat{L_1}x^{-1} + y\widehat{L_2}y^{-1} +xy\widehat{L_3}x^{-1}y^{-1}$$

Equivalently 
$SS^{(-1)} = 36$ if and only if the sets 
 $\{L_0, L_1, L_2, L_3\}$ and  $\{L_0, xL_1x^{-1}, yL_2y^{-1}, xyL_3y^{-1}x^{-1}\}$ are equal.


%

\begin{theorem}[The Spread Construction]
Let $G$ be a group of order $36$ with normal subgroup $H.$ Let $1,x,y,xy$ be a transversal of $H$ in $G$ such that 
the sets $\{L_0, L_1, L_2, L_3\}$ and  $\{L_0, xL_1x^{-1}, yL_2y^{-1}, xyL_3y^{-1}x^{-1}\}$ are equal then 
 $$S = \widehat{L_0}-xh_1\widehat{L_1}-yh_2\widehat{L_2}-xyh_3\widehat{L_3}$$
 is the Hadamard transform of a difference set in $G$. 
\end{theorem}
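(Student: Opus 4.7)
My plan is to verify the two defining conditions of a Hadamard transform from equation~(\ref{basic_hadamard}): that $S$ has coefficients in $\{\pm 1\}$, and that $SS^{(-1)} = 36$. Most of this is actually sketched in the paragraphs preceding the theorem, so the proof is largely bookkeeping together with two small gaps I need to close.

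For the coefficient condition, I would observe that each summand $\pm g_ih_i\widehat{L_i}$ of $S$ (writing $g_0 = 1, g_1 = x, g_2 = y, g_3 = xy$, $h_0 = 1$) is supported inside $g_ih_iH = g_iH$ since $h_i \in H$. These four cosets are disjoint because $\{1,x,y,xy\}$ is a transversal. Within its coset, each $\pm g_ih_i\widehat{L_i} = \pm g_ih_i(H - 2L_i)$ has $\pm 1$ coefficients, so $S$ globally has $\pm 1$ coefficients.

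For $SS^{(-1)} = 36$, I would expand the product as a double sum $\sum_{i,j}\epsilon_i\epsilon_j g_ih_i\widehat{L_i}\widehat{L_j}h_j^{-1}g_j^{-1}$ with $\epsilon_0 = 1$, $\epsilon_1 = \epsilon_2 = \epsilon_3 = -1$. Using that $H$ is abelian (so $h_i$ commutes with $\widehat{L_i}$) and normal (so $g_iHg_i^{-1} = H$), the diagonal ($i = j$) contributions simplify via identity~(2) to $12H - 6\sum_i g_i\widehat{L_i}g_i^{-1}$, which is exactly what the authors display. Off-diagonally, identity~(2) gives $\widehat{L_i}\widehat{L_j} = H$, and absorbing $h_i, h_j^{-1}$ into $H$ followed by normality yields $\epsilon_i\epsilon_j g_ig_j^{-1}H$. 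The one step the authors use but do not justify is that the twelve cross terms sum to zero, and this is the main obstacle I anticipate. I would handle it by passing to $\ZZ[G/H]$: set $T = \sum_i\epsilon_i\bar g_i = 2\cdot 1_{G/H} - \overline{G/H}$, so that $TT^{(-1)} = 4 - 4\overline{G/H} + \overline{G/H}^2 = 4$ (since $\overline{G/H}^2 = 4\overline{G/H}$). Translating back to $\ZZ[G]$, this forces $\sum_{i,j}\epsilon_i\epsilon_j g_ig_j^{-1}H = 4H$; the four diagonal terms already contribute $4H$, so the off-diagonal sum vanishes.

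Finally, the spread hypothesis is that $L_i \mapsto g_iL_ig_i^{-1}$ permutes $\{L_0,L_1,L_2,L_3\}$. Since $\widehat{L_i} = H - 2L_i$ depends only on $L_i$ and since $g_iHg_i^{-1} = H$ by normality, the same permutation identity transfers to $\{\widehat{L_i}\}$, giving $\sum_ig_i\widehat{L_i}g_i^{-1} = \sum_i\widehat{L_i} = -6 + 2H$ by identity~(3). Substituting back yields $SS^{(-1)} = 12H - 6(-6 + 2H) = 36$, and by equation~(\ref{basic_hadamard}) together with the coefficient check, $S$ is the Hadamard transform of a difference set. Apart from the cross-term cancellation, every step is a direct invocation of the identities listed for $\widehat{L_i}$.
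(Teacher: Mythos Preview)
Your proof is correct and follows exactly the paper's approach: expand $SS^{(-1)}$, use the $\widehat{L_i}$ identities on the diagonal terms, and invoke the spread hypothesis to collapse $\sum_i g_i\widehat{L_i}g_i^{-1}$ to $-6+2H$. In fact you do slightly more than the paper, since the authors simply write $SS^{(-1)}$ as the diagonal sum without comment, whereas your $T = 2\cdot 1_{G/H} - \overline{G/H}$ computation cleanly supplies the missing justification that the twelve cross terms cancel.
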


This technique give us a way to search for difference sets in groups of order 36.
All 35 inequivalent difference sets with parameters $( 36, 15, 6)$ are in nine groups and are listed in \cite{CRC}. 
Below, we write out the first 32 difference sets as being equivalent to something that comes from the spread construction.
 We wish to write the difference set transforms in the form $S= \widehat{L_{0}} -x h_1\widehat{L_{1}} - yh_2\widehat{L_{2}}- xyh_3\widehat{L_{3}}$ where $h_i \in H$
In many of these groups, in order to clarify the correspondence between the spread construction and the notation of \cite{CRC},  \cite{Kibler}, we write $S$ as a translate $gD_ig'$ of the difference set given in\cite{CRC},  \cite{Kibler}.


\begin{center}
$G=<a,b,c: a^{3}=b^{3}=c^{4}=[a,b]=[a,c]=[b,c]=1 >$ (abelian)
\begin{tabular}{|c|c|c|c|c|c|c|}
\hline
{\bf $S$} \hspace{.25 in} & {\bf $L_{0}$} \hspace{.25 in} & {\bf $h_1L_{1}$} \hspace{.25 in} & {\bf $h_2L_{2}$} \hspace{.25 in} & {\bf $h_3L_{3}$} \hspace{.25 in} & {$x$} \hspace{.25 in} & {$y$} \hspace{.25 in} \\
\hline \hline
$b\widehat{D_{1}}$ & $<a>$   & $<b>$  & $b<ab>$  & $b<ab^{2}>$ & $c$ & $c^{2}$ \\
$b\widehat{D_{2}}$ & $<a>$   & $<b>$  & $b<ab>$  & $ab<ab^{2}>$ & $c$ & $c^{2}$ \\
$b\widehat{D_{3}}$ & $<a>$   & $<b>$  & $b<ab>$  & $<ab^{2}>$ & $c$ & $c^{2}$ \\
$b\widehat{D_{4}}$ & $<a>$   & $<b>$  & $<ab>$  & $<ab^{2}>$ & $c$ & $c^{2}$ \\
\hline
\end{tabular}
\end{center}


\vs
\begin{center}
$G=<a,b,c: a^{3}=b^{3}=c^{4}=[a,b]=[a,c]=b^{2}cb^{-1}c^{-1}=1 >$
\begin{tabular}{|c|c|c|c|c|c|c|}
\hline

{\bf $S$} \hspace{.25 in} & {\bf $L_{0}$} \hspace{.25 in} & {\bf $h_1L_{1}$} \hspace{.25 in} & {\bf $h_2L_{2}$} \hspace{.25 in} & {\bf $h_3L_{3}$} \hspace{.25 in} & {$x$} \hspace{.25 in} & {$y$} \hspace{.25 in} \\

\hline \hline
$\widehat{D_{5}}b$ & $<a>$   & $b<ab^{2}>$  & $<b>$  & $b<ab>$ & $c$ & $c^{2}$ \\
$\widehat{D_{6}}b$ & $<a>$   & $b<ab^{2}>$  & $<b>$  & $<ab>$ & $c$ & $c^{2}$ \\
$a\widehat{D_{7}}$ & $<b>$   & $a<ab^{2}>$  & $<a>$  & $a<ab>$ & $c$ & $c^{2}$ \\
$a\widehat{D_{8}}$ & $<b>$   & $a<ab^{2}>$  & $<a>$  & $a^2<ab>$ & $c$ & $c^{2}$ \\
$a^2\widehat{D_{9}}b^2$ & $<a^{2}b>$   & $b^2<a>$  & $<ab>$  & $a^2<b>$ & $c$ & $c^{2}$ \\
$a^2\widehat{D_{10}}b^2$ & $<a^{2}b>$   & $b^2<a>$  & $<ab>$  & $<b>$ & $c$ & $c^{2}$ \\
\hline
\end{tabular}
\end{center}

%
%

\vs
\begin{center}
$G=<a,b,c: a^{3}=b^{3}=c^{4}=[a,b]=a^{2}ca^{-1}c^{-1}=b^{2}cb^{-1}c^{-1}=1 >$
\begin{tabular}{|c|c|c|c|c|c|c|}
\hline

{\bf $S$} \hspace{.25 in} & {\bf $L_{0}$} \hspace{.25 in} & {\bf $h_1L_{1}$} \hspace{.25 in} & {\bf $h_2L_{2}$} \hspace{.25 in} & {\bf $h_3L_{3}$} \hspace{.25 in} & {$x$} \hspace{.25 in} & {$y$} \hspace{.25 in} \\

\hline \hline
$\widehat{D_{11}}b$ & $<a>$   & $<ab>$  & $<b>$  & $b<ab^{2}>$ & $c$ & $c^{2}$ \\
\hline
\end{tabular}
\end{center}

%
%

\vs
\begin{center}
$G=<a,b,c: a^{3}=b^{3}=c^{4}=bca^{-1}c^{-1}=a^{2}cb^{-1}c^{-1}=1 >$ 
\begin{tabular}{|c|c|c|c|c|c|c|}
\hline

{\bf $S$} \hspace{.25 in} & {\bf $L_{0}$} \hspace{.25 in} & {\bf $h_1L_{1}$} \hspace{.25 in} & {\bf $h_2L_{2}$} \hspace{.25 in} & {\bf $h_3L_{3}$} \hspace{.25 in} & {$x$} \hspace{.25 in} & {$y$} \hspace{.25 in} \\

\hline \hline
$\widehat{D_{12}}b$ & $<a>$   & $ab<ab^{2}>$  & $<b>$  & $b<ab>$ & $c$ & $c^{2}$ \\
$\widehat{D_{13}}b$ & $<a>$   & $ab<ab^{2}>$  & $<b>$  & $b^2<ab>$ & $c$ & $c^{2}$ \\
$\widehat{D_{14}}b$ & $<a>$   & $ab<ab^{2}>$  & $<b>$  & $<ab>$ & $c$ & $c^{2}$ \\
$\widehat{D_{15}}b$ & $<a>$   & $ab<ab^{2}>$  & $a^{2}<b>$  & $b<ab>$ & $c$ & $c^{2}$ \\
$\widehat{D_{16}}b$ & $<a>$   & $b^2<ab>$  & $<b>$  & $b<ab^{2}>$ & $c$ & $c^{2}$ \\
\hline
\end{tabular}
\end{center}

%
%

\vs
\begin{center}
$G=<a,b,c: a^{3}=b^{3}=c^{2}=d^{2}=[a,b]=[c,d]=[a,c]=[a,d]=[b,c]=[b,d]=1 >$ (abelian)
\begin{tabular}{|c|c|c|c|c|c|c|}
\hline

{\bf $S$} \hspace{.25 in} & {\bf $L_{0}$} \hspace{.25 in} & {\bf $h_1L_{1}$} \hspace{.25 in} & {\bf $h_2L_{2}$} \hspace{.25 in} & {\bf $h_3L_{3}$} \hspace{.25 in} & {$x$} \hspace{.25 in} & {$y$} \hspace{.25 in} \\

\hline \hline
$a^2c\widehat{D_{17}}$ & $<b>$ & $<a>$ & $<ab^{2}>$  & $<ab>$ & $c$ & $d$ \\
$a^2c\widehat{D_{18}}$ & $<b>$ & $<a>$  & $a^2<ab>$  & $a<ab^{2}>$ & $c$ & $d$ \\
$a^2c\widehat{D_{19}}$ & $<ab^{2}>$ & $<a>$ & $a^2<ab>$  & $a^2<b>$ & $c$ & $d$ \\
\hline
\end{tabular}
\end{center}

%
%

\vs

\begin{center}
$G=<a,b,c: a^{3}=b^{3}=c^{2}=d^{2}=[a,b]=[c,d]=[a,d]=[b,d]=1, a^{2}c=ca, b^{2}c=cb >$
\begin{tabular}{|c|c|c|c|c|c|c|}
\hline

{\bf $S$} \hspace{.25 in} & {\bf $L_{0}$} \hspace{.25 in} & {\bf $h_1L_{1}$} \hspace{.25 in} & {\bf $h_2L_{2}$} \hspace{.25 in} & {\bf $h_3L_{3}$} \hspace{.25 in} & {$x$} \hspace{.25 in} & {$y$} \hspace{.25 in} \\

\hline \hline
$\widehat{D_{20}}b$ & $<a>$   & $<b>$  & $b<ab>$  & $b<ab^{2}>$ & $c$ & $d$ \\
\hline
\end{tabular}
\end{center}

%
%

\vs

\begin{center}
$G=<a,b,c: a^{3}=b^{3}=c^{2}=d^{2}=[a,b]=[c,d]=[a,d]=[b,c]=[b,d]=a^{2}ca^{-1}c^{-1}=1 >$ 
\begin{tabular}{|c|c|c|c|c|c|c|}
\hline

{\bf $S$} \hspace{.25 in} & {\bf $L_{0}$} \hspace{.25 in} & {\bf $h_1L_{1}$} \hspace{.25 in} & {\bf $h_2L_{2}$} \hspace{.25 in} & {\bf $h_3L_{3}$} \hspace{.25 in} & {$x$} \hspace{.25 in} & {$y$} \hspace{.25 in} \\

\hline \hline
$b\widehat{D_{21}}$ & $<a>$   & $b<ab^{2}>$  & $<b>$  & $b<ab>$ & $c$ & $d$ \\
$b\widehat{D_{22}}$ & $<a>$   & $b<ab^{2}>$  & $<b>$  & $a<ab>$ & $c$ & $d$ \\
$\widehat{D_{23}}a$ & $<b>$   & $a<ab^{2}>$  & $<a>$  & $a<ab>$ & $c$ & $d$ \\
$\widehat{D_{24}}a$ & $<b>$   & $a<ab^{2}>$  & $<a>$  & $<ab>$ & $c$ & $d$ \\
$\widehat{D_{25}}a$ & $<ab^{2}>$   & $<a>$  & $a<ab>$  & $a<b>$ & $c$ & $d$ \\
$\widehat{D_{26}}a$ & $<ab^{2}>$   & $<a>$  & $<ab>$  & $a<b>$ & $c$ & $d$ \\
\hline
\end{tabular}
\end{center}

%
%

\vs

\begin{center}
$G=<a,b,c: a^{3}=b^{3}=c^{2}=d^{2}=[a,b]=[c,d]=[a,d]=[b,c]=1, b^{2}d=db, a^{2}c=ca >$
\begin{tabular}{|c|c|c|c|c|c|c|}
\hline

{\bf $S$} \hspace{.25 in} & {\bf $L_{0}$} \hspace{.25 in} & {\bf $h_1L_{1}$} \hspace{.25 in} & {\bf $h_2L_{2}$} \hspace{.25 in} & {\bf $h_3L_{3}$} \hspace{.25 in} & {$x$} \hspace{.25 in} & {$y$} \hspace{.25 in} \\

\hline \hline
$\widehat{D_{27}}b$ & $<a>$   & $b<ab^{2}>$  & $b<ab>$  & $<b>$ & $c$ & $d$ \\
$\widehat{D_{28}}b$ & $<a>$   & $b<ab^{2}>$  & $b<ab>$  & $a<b>$ & $c$ & $d$ \\
$\widehat{D_{29}}a$ & $<ab^{2}>$   & $<a>$  & $a<b>$  & $a<ab>$ & $c$ & $d$ \\
$\widehat{D_{30}}a$ & $<ab^{2}>$   & $<a>$  & $<b>$  & $a<ab>$ & $c$ & $d$ \\
$\widehat{D_{31}}a$ & $<ab^{2}>$   & $a<b>$  & $<a>$  & $a<ab>$ & $c$ & $d$ \\
$\widehat{D_{32}}a$ & $<ab^{2}>$   & $a<b>$  & $<a>$  & $<ab>$ & $c$ & $d$ \\
\hline
\end{tabular}
\end{center}

%
%

\vs
This shows that all 32 difference sets in the first eight groups having difference sets in \cite{CRC} can be explained by the spread construction. 



\section{Using Relative Difference Sets}

The last group of order 36 which contains difference sets is the group listed in Kibler's tables (\cite{Kibler}) as $G=<a,b,c: a^{3}=b^{3}=c^{2}=d^{2}=[a,b]=[c,d]=[b,c]=[b,d]=1, da=ac, cda=ad >$. 
This group, {\em SmallGroup(36,11)} in the catalogue of small groups in {\em GAP} \cite{GAP}, is isomorphic to $A_4 \times C_3.$  
Careful analysis of the situation in this group reveals that {\em no} version of the lines $L_0, L_1, L_2$ and $L_3$ will create difference sets in this final group.
Fortunately there is a nonabelian alternative, replacing a spread of lines with a configuration of relative difference sets.





\vs

A $(m,n,k,\lambda )$ relative difference set $R$  (size $k$) in a group $G$ (of order $mn$) with respect to a forbidden subgroup $N$ (of order $n$) is a set $R$ with the property that 

\begin{equation}
RR^{(-1)}= k + \lambda(G-N)
\end{equation}
 
 Relative difference sets are introduced in \cite{Butson}; recommended references on relative difference sets are  \cite{Jungnickel} or section 3.1.4 of \cite{Schmidt}.
 


In $H=C_{3}\times C_{3}=<a,b>$ the subsets $\{1, a, a^{2}b^{2} \}$, $\{1,a^{2}, ab^{2}\}$, and $\{1,ab, a^{2}b\}$ are all $(3,3,3,1)$ relative difference sets with respect to the forbidden subgroup $<b>$. 
A $(3,3,3,1)$ relative difference set in $H$ has the property that if $\chi$ is principle on $<b>$ then $\chi(R) = 0;$ if $\chi$ is not principle on $<b>$ then $\chi(R)\overline{\chi(R)}= \chi(RR^{(-1)})=3.$

Let $R := 1+a+a^2b, R_1:= 1+a+a^2b^2$ and $R_2=R_1^{(-1)} = 1+ab+a^2$.
These are relative $(3,3,3,1)$ difference sets with respect to $N := <b>.$
Again, we let $L_0 = <a>$ and $\widehat{L_0}=H-2L_0.$
Similarly $\widehat{R_1} := H-2R_1$ and $\widehat{R_2} := H-2R_2$.

In the last group of order 36, $G=A_4 \times C_3$, there are three inequivalent difference sets.  In \cite{CRC}, these are listed as $D_{33}, D_{34}$ and $D_{35}$.  
Although these difference sets cannot be written in  terms of a spread, they may be written in an analogous way using relative difference sets.

We take advantage of the structure of this single group  $G=A_4 \times C_3$.
  We may write $A_4$ in terms of an element $a$ of order three and an involution $c$. 
   (One may specifically choose $a = (123)$ and $c=(12)(34)$. 
   Let $b$ be an element of order 3 which is in the center of $G$ and set $H:= <a,b>.$

  
  Since $<a>$ is not normal in $A_4$, then $H$ is not normal in $G$.
  
 \begin{lemma}
Suppose $R_0 = \{r_1, r_2, r_3\}$ and $R_1$ are relative difference sets in $H$ with respect to $L_0=<b>.$  
Suppose, furthermore that there is an element $g$ such that 
$(1+R_0g)H=G,$
(so $\{1, r_1g, r_2g, r_3g\}$ is a {\em left} transversal of $H$ in $G$.)
Then
 \begin{equation}\boxed{S=\widehat{L_{0}} -R_0g\widehat{R_1}}\end{equation}
 is the transform of a $(36,15,6)$ Hadamard difference set.
 \end{lemma}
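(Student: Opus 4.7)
The plan is to verify directly that every coefficient of $S$ lies in $\{-1,+1\}$ and that $SS^{(-1)}=36$; by the discussion following equation~(\ref{basic_hadamard}), this will be enough to conclude that $S$ is the Hadamard transform of a Hadamard difference set in $G$.

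First I would expand the definition of $S$ and use the transversal hypothesis, in the form $R_{0}gH = G - H$ in $\ZZ[G]$, to rewrite $S = 2H - G - 2L_{0} + 2R_{0}gR_{1}$. Restricted to $H$ this collapses to $\widehat{L_{0}} = H - 2L_{0}$, which already has $\pm 1$ entries. On $G\setminus H$ every $x$ has a unique expression $x = r_{i}gh$ with $r_{i}\in R_{0}$ and $h\in H$; the relative-difference-set property of $R_{0}$ (no nonidentity element of $L_{0}$ arises as $r_{i}r_{j}^{-1}$) together with $H\cap gHg^{-1} = L_{0}$ (a consequence of the transversal condition and centrality of $b$) rules out any competing decomposition $x = r_{j}gs$ with $j\ne i$. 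Hence the coefficient of $x$ in $R_{0}gR_{1}$ is $1$ if $h\in R_{1}$ and $0$ otherwise, forcing $S(x)\in\{-1,+1\}$.

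For $SS^{(-1)}=36$ I would expand the product into the four terms $\widehat{L_{0}}^{2}$, $-\widehat{L_{0}}\widehat{R_{1}}^{(-1)}g^{-1}R_{0}^{(-1)}$, $-R_{0}g\widehat{R_{1}}\widehat{L_{0}}$, and $R_{0}g\widehat{R_{1}}\widehat{R_{1}}^{(-1)}g^{-1}R_{0}^{(-1)}$, and simplify each using a short list of identities. The diagonal pieces are $\widehat{L_{0}}^{2} = 3H - 6\widehat{L_{0}}$ (already in the paper) and $\widehat{R_{1}}\widehat{R_{1}}^{(-1)} = 12 + H - 4L_{0}$ (a direct calculation from $R_{1}R_{1}^{(-1)} = 3 + H - L_{0}$). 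The two cross terms each collapse to $-(G - H)$: the key ingredient is $\widehat{R_{1}}\widehat{L_{0}} = H$, which follows from $R_{1}L_{0} = H$ (any $(3,3,3,1)$ rds in $H$ is a transversal of $L_{0}$), combined with $R_{0}gH = G - H$ and its image $Hg^{-1}R_{0}^{(-1)} = G - H$ under the ${}^{(-1)}$ operation.

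The main obstacle I anticipate is the fourth term. After using centrality of $b$ to conclude $gL_{0}g^{-1} = L_{0}$, it reduces to $12R_{0}R_{0}^{(-1)} + R_{0}(gHg^{-1})R_{0}^{(-1)} - 4R_{0}L_{0}R_{0}^{(-1)}$, so the real work is to show $R_{0}(gHg^{-1})R_{0}^{(-1)} = H + 2G$. The slick way is to use associativity, $R_{0}(gHg^{-1}) = (R_{0}gH)g^{-1} = (G - H)g^{-1}$, and then apply the ${}^{(-1)}$-image of the transversal identity a second time to get $R_{0}(gHg^{-1})R_{0}^{(-1)} = GR_{0}^{(-1)} - Hg^{-1}R_{0}^{(-1)} = 3G - (G - H) = H + 2G$. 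Reassembling the four terms, the $H$, $L_{0}$, and $G$ contributions cancel exactly, leaving $SS^{(-1)} = 36$.
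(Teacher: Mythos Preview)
Your proof is correct but takes a genuinely different route from the paper.  The paper establishes $SS^{(-1)}=36$ by checking every irreducible representation of $A_4\times C_3$: it computes $\phi(S)\overline{\phi(S)}^{t}$ separately for the nine linear characters and for the three degree-three irreducibles, using the explicit matrix form of the degree-three representation of $A_4$.  You instead carry out a direct group-ring calculation, collapsing the four terms of $SS^{(-1)}$ via the identities $R_0gH=G-H$, $gL_0g^{-1}=L_0$, $R_iL_0=H$, and $R_iR_i^{(-1)}=3+H-L_0$, with the key step $R_0(gHg^{-1})R_0^{(-1)}=(G-H)g^{-1}R_0^{(-1)}=3G-(G-H)=H+2G$.

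Your approach is more elementary (no representation theory) and, notably, more general: it never uses the specific structure of $A_4\times C_3$ beyond the centrality of $b$, so the same argument would apply in any group of order $36$ containing $H$ with $L_0$ central and admitting such a transversal.  The paper's representation-theoretic approach, by contrast, makes visible \emph{why} the relative-difference-set hypothesis is the natural one: it is exactly the condition that forces $|\chi(R_i)|^2\in\{0,3\}$ on linear characters and $\phi(R_i)\overline{\phi(R_i)}^{t}=3I_3$ on the nonlinear ones.

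One minor remark: in your coefficient argument on $G\setminus H$, the detour through $H\cap gHg^{-1}=L_0$ is not needed.  Once you know $\{1,r_1g,r_2g,r_3g\}$ is a left transversal, the index $i$ in $x=r_igh$ is already determined by the coset $xH$, so no ``competing decomposition'' can arise.  The intersection identity is nonetheless correct and follows from your hypotheses as you indicate.
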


Observe that since $\{1, r_1g, r_2g, r_3g\}$ is a {\em left} transversal of $H$ in $G$ and since $R_1 \subseteq H$, the coefficients of $S$ are $\pm1$.  (Note also that although $\{1, r_1g, r_2g, r_3g\}$ is a left transversal of $H$ in $G$ it can{\em not} be a right transversal; indeed all the members of the set are in the right coset $Hg$!  Thus this construction makes sense {\em only} if $H$ is {\em not} normal in $G$.)

We show that $S$ obeys the equation 
 $SS^{(-1)}=36$ by showing that under all irreducible representations of $G$,  $SS^{(-1)}$ is mapped to 36.
For example, under the trivial representation, both $\widehat{L_0}$ and $\widehat{R_1}$ are mapped to 3.
Thus  $S$ is mapped to $3-(3)(1)(3)=-6$ so $SS^{(-1)}$ is mapped to 36.

$A_4$ has three linear representations and a single degree three irreducible representation.  
Let us write each element of $G$ uniquely in the form $\sigma b^j$ where $\sigma \in A_4.$

Set $\omega := e^{2\pi/3},$ a cube root of unity.  
The irreducible representations of $<b>$ merely send $b$ to one of three cube roots of unity, $1, \omega,$ or $ \bar{\omega}.$

Given a  irreducible representation $\phi$ of $A_4$ and  irreducible representation $\chi$ of $<b>$, the function which maps 
$\sigma b^j$ to $\phi(\sigma) \cdot \chi(b)^j$ is an irreducible representation of $G$.  
Indeed, all irreducible representation of $G$ have this form.

\vs
\subsection{The nonlinear irreducible representations}

Suppose that $\phi$ is the irreducible degree three representation of $A_4$.  
Then $\phi$ is equivalent to a representation which sends $a$ to 
$\begin{pmatrix} 0 & 1 & 0 \\ 0 & 0 & 1 \\ 1 & 0 & 0 \end{pmatrix}$
and so $\phi(<a>)= J_3$, the $3\times 3$ matrix of all ones.
If the representation $\phi$ also maps $b$ to the identity matrix then 
$\phi(H) = 3J_3 \text{ and } \phi(R_0) = \phi(\widehat{R_1}) = J_3$.
Since $\phi(<b>) = 3 I_3$ then 
$\phi(\widehat{L_0}) =    3J_3 - 6I_3.$

An element of $A_4$ not in $H$ is mapped to a signed matrix with exactly two $-1$s and one $1$.
(For example, $c$ might be mapped to 
$\begin{pmatrix} -1 & 0 & 0 \\ 0 & -1 & 0 \\ 0 & 0 & 1 \end{pmatrix}$
and $ac$ to
$\begin{pmatrix}  0 & 0 & 1 \\ -1 & 0 & 0 \\ 0 & -1 & 0 \\ \end{pmatrix}$.)
Since $g \not\in H$ then
$\phi(R_0g\widehat{R_1}) = J_3,$
and so 
$$\phi(S) =  4J_3 - 6I_3 =   \begin{pmatrix} -2 & 4 & 4 \\ 4 & -2 & 4 \\ 4 & 4 & -2 \\ \end{pmatrix}.$$
One may then verify that $\phi(S)\overline{\phi(S)}^t = 36 I_3.$

\vs
If the irreducible degree 3 representation maps $b$ to $\omega I_3$ (or $\bar{\omega} I_3$) then  $\phi(H) =\phi(<b>) = 0.$
Since $R_0R_0^{(-1)} = 3 + H-<b>$ then $\phi(R_0)\overline{\phi(R_0)}^t = 3 I_3.$
Thus $\phi(S) = \phi(R_0)\phi(g)\phi(\widehat{R_1})= -2\phi(R_0)\phi(g)\phi(R_1)$
and so 
$$\phi(SS^{(-1)})= \phi(S)\phi(S^{(-1)}) = 4\phi(R_0)\phi(g)\phi(R_1R_1^{(-1)})\phi(g^{-1})\phi(R_0^{(-1)}).$$
Replacing $\phi(R_1R_1^{(-1)})$ by $3I_3$ and observing that $\phi(g)3I_3\phi(g^{-1}) = 3I_3,$ this simplifies to 
$$ = 4\phi(R_0)(3I_3)\phi(R_0^{(-1)})= 12\phi(R_0)\phi(R_0^{(-1)})
= 12(3I_3)=36I_3.$$
Again, we have that $\phi(S)\overline{\phi(S)}^t = 36 I_3.$

\vs
\subsection{The linear representations of $G$}

The nontrivial linear representations of $G$ send the elements of order two (such as  $c, d$) to 1 and act as linear characters of $H$. 
Since $R_0$ and $R_1$ are relative difference sets then if
$b$ is mapped to 1, then $R_0$ and $R_1$ (and $\widehat{R_1}$) are mapped to zero.
$\widehat{L_0} = H-2<b>$ is mapped to 6 and so $S$ is mapped to 6; clearly $SS^{(-1)}$ is then mapped to 36.

If $b$ is mapped to $\omega$ or $\bar{\omega}$ then $<b>$ and $\widehat{L_0}$ are mapped to zero while $R_0$ and $R_1$ are mapped to complex numbers of length $\sqrt{3}$.
The element $\widehat{R_1}$ is mapped to $-2$ times the image of $R_1.$

Therefore $S$ is mapped to the image of $-R_0g\widehat{R_1}$ which is 2 times the image of $R_0R_1g$ and so is mapped to a complex number of length 6.  Again, 
$SS^{(-1)}$ is mapped to 36.

In all irreducible representations of $G$, $SS^{(-1)}$ is mapped to 36 and so $SS^{(-1)}=36.$

So, as long as  $\{1,r_1, r_2, r_3\}$ is a left transversal for $H$ in $G$, the group ring element $S$ is the transform of a Hadamard difference set.

\begin{proposition}
$D_{33}$, $D_{34}$, and $D_{35}$ are each equivalent to a difference set  which may be written as $\widehat{L_0} - (g_1+ag_1+a^2bg_1)R_1$ where $1+a+a^2b$ and $R_1$ are relative difference sets in $<b,a>$ with forbidden subgroup $<b>$.
\end{proposition}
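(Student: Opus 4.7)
The plan is to leverage the lemma directly: since it already guarantees that $S = \widehat{L_0} - R_0 g_1 \widehat{R_1}$ is a Hadamard transform whenever $R_0$, $R_1$ are $(3,3,3,1)$ relative difference sets in $H = \langle a,b\rangle$ with forbidden subgroup $\langle b\rangle$ and $\{1\} \cup R_0 g_1$ is a left transversal of $H$ in $G$, the proposition reduces to an existence-by-exhibition statement. With $R_0 = 1 + a + a^2 b$ fixed, I need only list, for each of $D_{33}, D_{34}, D_{35}$, a concrete choice of $R_1 \in \{1+a+a^2b,\ 1+a+a^2b^2,\ 1+ab+a^2\}$ and a coset representative $g_1 \in G \setminus H$ such that the resulting $S$ equals (a translate of) $\widehat{D_i}$.

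First I would set up the three candidate relative difference sets in $H$ explicitly, recalling from the earlier discussion that $1+a+a^2b$, $1+a+a^2b^2$, and $1+ab+a^2$ exhaust them (up to the choice also made explicit in the excerpt). Next I would parametrize the possible choices of $g_1$: the cosets of $H$ in $G = A_4 \times C_3$ are represented by $1, c, ac, a^2c$ (any involution in $A_4$ together with its conjugates under $\langle a\rangle$), and the product $cg_1$-adjustments within a coset give only finitely many candidates to test. For each $g_1$, verifying the transversal condition $\{1, g_1, ag_1, a^2 b g_1\}$ meets every coset of $H$ once amounts to checking that the three conjugates $g_1 H g_1^{-1}, \; ag_1 H (ag_1)^{-1}, \; a^2bg_1 H (a^2bg_1)^{-1}$ cover the three nontrivial cosets — this uses crucially that $H$ is \emph{not} normal in $G$, as observed in the remark following the lemma.

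Then I would tabulate, exactly in the spirit of the tables in the preceding section, the specific $(R_1, g_1)$ triples that realize $D_{33}, D_{34}$, and $D_{35}$, presenting $S$ as a two-sided translate $g D_i g'$ of the CRC difference set. This is a finite computer-algebra check (e.g.\ in GAP): enumerate the admissible $(R_1, g_1)$ pairs, form the corresponding $S$, compute the associated difference set $D = \tfrac12(G - S)$, and match it against the CRC list of $D_{33}, D_{34}, D_{35}$ up to the equivalence relation (group-automorphism plus translation).

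The main obstacle is not the algebra — the lemma has already done the heavy lifting of proving $SS^{(-1)} = 36$ via character sums on each irreducible representation of $G = A_4 \times C_3$ — but rather ensuring coverage: one must check that the three inequivalent difference sets $D_{33}, D_{34}, D_{35}$ all actually arise from some choice in the (finite) parameter space $\{(R_1, g_1)\}$, rather than merely showing that every such choice yields some Hadamard difference set. In principle it is conceivable that the construction only produces a subset of the three. The verification that all three are captured is therefore computational and case-based, and is what makes the proposition a genuine statement rather than a corollary.
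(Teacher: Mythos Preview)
Your approach is essentially the paper's: the proof there is nothing more than a table exhibiting, for each of $D_{33}, D_{34}, D_{35}$, explicit choices of $L_0$, relative difference sets, and coset representatives that realize a translate (or automorphic image) of the CRC difference set in the stated form. One caution: the paper's table does not restrict $R_1$ to the three sets $\{1+a+a^2b,\ 1+a+a^2b^2,\ 1+ab+a^2\}$ you name, but allows translates such as $1+ab^2+a^2b^2$ and $b^2+ab+a^2b$, so your enumerated parameter space should include all $(3,3,3,1)$ relative difference sets in $H$ with forbidden subgroup $\langle b\rangle$, not just those containing the identity.
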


\begin{proof}
 The next table shows what each of these are.

\begin{center}
$G=<a,b,c: a^{3}=b^{3}=c^{2}=d^{2}=[a,b]=[c,d]=[b,c]=[b,d]=1, da=ac, cda=ad >$
\begin{tabular}{|c|c|c|c|c|c|c|}
\hline
{\bf $S$} \hspace{.25 in} & {\bf $L_{0}$} \hspace{.25 in} & {\bf $R_{1}$} \hspace{.25 in} & {\bf $R_{2}$} \hspace{.25 in} & {\bf $R_{3}$} \hspace{.25 in} & {$x$} \hspace{.25 in} & {$y$} \hspace{.25 in} \\
\hline \hline
$cD_{33}$ & $a^{2}<b>$   & $1+a+a^{2}b^{2}$  & $1+ab+a^{2}b$  & $1+a^{2}+ab^{2}$ & $c$ & $d$ \\
$cdD_{34}^{c}$ & $<b>$   & $1+ab^{2}+a^{2}b^{2}$  & $1+a^{2}+ab$  & $1+a+a^{2}b$ & $c$ & $d$ \\
$D_{35}^{cd}$ & $a^{2}<b>$   & $1+a^{2}+ab$  & $1+a+a^{2}b$  & $b^{2}+ab+a^{2}b$ & $c$ & $d$ \\
\hline
\end{tabular}
\end{center}

\end{proof}

\section{Conclusions}

We have shown that in the eight groups of order 36 with difference sets then all difference sets can be explained by the spread construction and the difference sets in the last group can be explained by a spread construction using relative difference sets. A full list of difference sets with parameters $(144,66,30 )$ is not known, but it would be interesting to see if all difference sets have a spread construction involved. The authors know that in groups of order 144, of those with a noncyclic Sylow-3 subgroup, using various nonexistence results and a version of the spread construction, one can conclude existence or nonexistence of Hadamard difference sets in all but one group of order 144. The group action in the unknown group of order 144 is similar to the group action in the group of order 36 whose difference sets do not come from the spread construction. It would be interesting to see whether or not the relative difference set construction could be altered to work in this group as it works in the last group of order 36.

\end{document}